\newcommand\be{\begin{equation}}
\newcommand\ee{\end{equation}}
\newcommand\bea{\begin{eqnarray}}
\newcommand\eea{\end{eqnarray}}
\newcommand\beaa{\begin{eqnarray*}}
\newcommand\eeaa{\end{eqnarray*}}
\newcommand\beba{\begin{equation}\left\{\begin{array}{rcl}}
\newcommand\eeba{\end{array}\right.\end{equation}}
\newcommand\bebaa{\begin{equation*}\left\{\begin{array}{rcl}}
\newcommand\eebaa{\end{array}\right.\end{equation*}}
\newcommand\bR{{\mathbb{R}}}
\newcommand\bC{{\mathbb{C}}}
\newcommand\bN{{\mathbb{N}}}
\newcommand\cK{{\mathcal{K}}}
\newcommand{\eps}{\epsilon}
\newcommand{\dsum}{\displaystyle \sum}
\newcommand{\lng}{\left\langle  \right.}
\newcommand{\rng}{\left.  \right\rangle}
\newtheorem{theorem}{Theorem}[section]
\newtheorem{lemma}[theorem]{Lemma}
\newtheorem{corollary}[theorem]{Corollary}
\newtheorem{remark}[theorem]{Remark}
\numberwithin{equation}{section}
\begin{document}
\nolinenumbers

\title[PDE approximation]{On the approximation of spatial convolutions\\ by PDE systems}

\author[H. Ishii]{Hiroshi Ishii}
\address{Research Center of Mathematics for Social Creativity, Research Institute for Electronic Science, Hokkaido University, Hokkaido, 060-0812, Japan}
\email{hiroshi.ishii@es.hokudai.ac.jp}

\author[Y. Tanaka]{Yoshitaro Tanaka}
\address{Department of Complex and Intelligent Systems, School of Systems Information Science, Future University Hakodate, 116-2 Kamedanakano-cho, Hakodate, Hokkaido 041-8655, Japan}
\email{yoshitaro.tanaka@gmail.com}

\thanks{Date: \today. Corresponding author: Hiroshi Ishii}

\thanks{{\em Keywords. Green function, Modified Bessel function, Integral kernel, Nonlocal operator, Convolution, Completeness}}
\thanks{{\em 2020 MSC} Primary 41A30, Secondary 35J08, 44A35}

\begin{abstract}
This paper considers the approximation of spatial convolution with a given radial integral kernel.
Previous studies have demonstrated that approximating spatial convolution using a system of partial differential equations (PDEs) can eliminate the analytical difficulties arising from integral formulations in one-dimensional space.
In this paper, we establish a PDE system approximation for spatial convolutions in higher spatial dimensions.
We derive an appropriate approximation function for given arbitrary radial integral kernels as a linear sum of Green functions.
In establishing the validity of this methodology, 
we introduce an appropriate integral transformation to show the completeness of the basis constructed by the Green functions.
This framework enables the approximation of nonlocal convolution-type operators with arbitrary radial integral kernels using linear sums of PDE solutions.
Finally, we present numerical examples that illustrate the effectiveness of our proposed method.
\end{abstract}

\maketitle
%%%%%%%%%%%%%%%%%%%%%%%%%%%%%%%%%
\setcounter{tocdepth}{3}

%%%%%%%%%%%%%%%%%%%%
%%%%%%%%%%%%%%%%%%%%
\section{Introduction}

In this paper, we address an approximation problem for the spatial convolution
\beaa
(K*f)(x) := \int_{\bR^n} K(x-y)f(y)dy = \int_{\bR^n} K(y)f(x-y)dy,
\eeaa
where the integral kernel $K:\bR^n\to\bR$ is a radial function 
and the input function $f:\bR^n\to\bR$ is a measurable function.

%%%%%
Evolution equations with spatial convolutions have appeared as mathematical models in various fields, such as material science \cite{Bates}, neuroscience \cite{Amari}, cell biology \cite{CMSTT, EIKMT, Kondo, Murray}, and ecology \cite{HMMV, MRS}.
Many analytical treatments have been proposed to clarify the mathematical structure of solutions to evolution equations with spatial convolutions
(e.g., \cite{AMRT, BCC, BFRW, X.C, EGIW, EI}).
Mathematical analysis faces challenges in that the spatial convolution yields a limited understanding of local properties of the solution and is computationally expensive in numerical simulations.
To overcome these difficulties, 
a methodology has been proposed for rewriting the spatial convolution with PDEs \cite{AFSS, Faye, FH, LT, MT, NTY1, NTY2}.
In particular, the Green function $k(x;d)$ with a constant $d>0$ that is given by the solution to
\beaa
d\Delta k(x) -k(x) + \delta(x) =0 \quad (x\in\bR^n)
\eeaa
is often used, where $\Delta$ is the Laplace operator and $\delta(x)$ is the Dirac delta function.
The explicit form of $k(x;d)$ is given in Section \ref{sec:comp}. 
As special cases, when we give the integral kernel as 
\beaa
K_{N}(x) := \dsum^{N}_{j=1} \alpha_j k_j(x),\quad k_j(x):=k(x;d_j)
\eeaa
with $N\in\bN$, constants $\{\alpha_{j}\}_{1\le j\le N}$, and positive constants $\{d_j\}_{1\le j\le N}$,
the spatial convolution is represented as 
\beaa
&&(K_{N}*f)(x)= \dsum^{N}_{j=1} \alpha_{j} w_j(x),\\
&&d_j \Delta w_j(x) -w_j(x) + f(x)=0\quad (j=1,2,\ldots,N)
\eeaa
by setting $w_j(x):=(k_j*f)(x)\ (j=1,\ldots,N)$.
That is, the spatial convolution is expressed as the linear sum of the solution to the PDE system.

%%%%%%%
Our goal in approximating spatial convolutions using a PDE system is to approximate $K*f$ by $K_N*f$ for a given input function $f$ by appropriately choosing $(N, \{\alpha_j\}_{1\le j\le N}, \{d_j\}_{1\le j\le N})$ for a radial integral kernel $K$.
This problem has become fundamental in approximating mathematical models that involve spatial convolution with a general radial integral kernel using PDE systems, such as reaction--diffusion models \cite{NTY1, NTY2} and chemotaxis models \cite{MT} in one-dimensional space.
As an error estimate, 
the Young inequality yields
\beaa
\left\| K*f - K_{N}*f \right\|_{L^t(\bR^n)} \le \|K- K_N \|_{L^{p}(\bR^n)} \|f \|_{L^q(\bR^n)},
\eeaa
for $p,q,t\in [1,+\infty]$ with $1/p+1/q=1/t+1$.
Thus, we are able to obtain an approximation of the spatial convolution by identifying the approximation of radial integral kernels using linear sums of Green functions under the appropriate norm.
Approximations of radial integral kernels under the $L^{\infty}$-norm have been reported in a one-dimensional bounded domain with periodic boundary conditions \cite{NTY1, MT}.
Moreover, an approximation of radial integral kernels with the $L^{2}$-norm has been established for the one-dimensional entire space \cite{NTY2}.

%%%%%
To extend PDE approximation methods for mathematical models with spatial convolutions to more general settings, 
it is important to adapt the approximation to higher spatial dimensions.
Moreover, it is beneficial to extend the method to handle derivatives.
This paper presents a PDE-based approximation method for spatial convolutions in higher-dimensional cases.

%%%%%%
The remainder of the paper is organized as follows. 
In Section 2, we introduce our main results for the approximation of radial integral kernels and spatial convolutions.
The proofs for the main results are given in Section 3. 
We present a numerical example in Section 4 and concluding remarks in Section 5.

\bigskip
%%%%%%%
\section{Approximation theorem}

We first state our main result for the approximation of integral kernels.
%%%
\begin{theorem}\label{thm:ker}
Let $m$ be a non-negative integer
and $\{d_j\}_{j\in\bN}$ be a sequence of distinct positive constants that has a positive accumulation point.
Assume that $K\in H^{m}(\bR^n)$ is a radial function.
Then, for any $\eps>0$,
there exist $N\in\bN$ greater than $(2m+n-1)/4$ and constants $\{\alpha_j\}_{1\le j\le N}$ such that 
\beaa
K_N \in H^{m}(\bR^n)
\eeaa
and
\beaa
\left\| K - K_N \right\|_{H^{m}(\bR^n)} < \eps
\eeaa
hold.
\end{theorem}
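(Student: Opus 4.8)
The plan is to work on the Fourier side, where the Green function $k(\cdot;d)$ has the explicit symbol $\widehat{k}(\xi;d) = (1+d|\xi|^2)^{-1}$, so that $\widehat{K_N}(\xi) = \sum_{j=1}^N \alpha_j (1+d_j|\xi|^2)^{-1}$. Since $K$ is radial, $\widehat K$ is radial as well, say $\widehat K(\xi) = G(|\xi|^2)$ for some function $G$ of the single variable $r = |\xi|^2 \in [0,\infty)$. The $H^m(\bR^n)$ norm of $K - K_N$ is, up to constants, $\int_{\bR^n} (1+|\xi|^2)^m |\widehat K(\xi) - \widehat{K_N}(\xi)|^2\, d\xi$, which after passing to polar coordinates becomes a weighted $L^2$ norm on $(0,\infty)$ in the variable $r$ with weight $r^{(n-2)/2}(1+r)^m\, dr$ (the factor $r^{(n-2)/2}$ from the surface-area Jacobian $|\xi|^{n-1}d|\xi|$ after substituting $r=|\xi|^2$). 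So the theorem reduces to a one–variable weighted-$L^2$ density statement: the functions $r \mapsto (1+d_j r)^{-1}$, $j\in\bN$, span a dense subspace of the weighted space $L^2\big((0,\infty);\, r^{(n-2)/2}(1+r)^m\, dr\big)$, and moreover finite linear combinations land back in $H^m(\bR^n)$ — the latter needs the decay of $\widehat{K_N}$ at infinity to beat the weight, which is exactly where the hypothesis $N > (2m+n-1)/4$ enters, since a single term decays like $r^{-1}$ and we need $(1+r)^m r^{(n-2)/2} r^{-2} \in L^1$ near infinity for each summand, i.e.\ decay better than $r^{-(2m+n-2)/2 - 1}$, forcing enough cancellation among the $\alpha_j$; this is a finite-dimensional linear-algebra constraint that can always be met once $N$ is large enough.

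The key analytic step is the density assertion. Here I would use the substitution that turns $(1+d r)^{-1}$, as a function of the parameter $d$, into a Laplace- or Stieltjes-type transform: writing $(1+dr)^{-1} = \int_0^\infty e^{-s} e^{-s d r}\,ds$ shows that each basis function is a superposition of pure exponentials $e^{-\sigma r}$, and conversely the closed span of $\{(1+d_j r)^{-1}\}_j$ in the weighted space will contain (by taking divided differences in the parameter, exploiting that $\{d_j\}$ has a finite positive accumulation point, and passing to the limit) all derivatives $\partial_d^k (1+dr)^{-1}\big|_{d=d_*}$ at the accumulation point $d_*$, hence all functions $r^k(1+d_* r)^{-k-1}$, $k\ge 0$. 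The standard route is then: a nonzero $g$ in the weighted space orthogonal to every $(1+d_j r)^{-1}$ must, via the accumulation-point/analyticity argument, be orthogonal to $r^k(1+d_* r)^{-k-1}$ for all $k$; substituting $t = 1/(1+d_* r)$ maps $(0,\infty)$ to $(0,1)$ and turns these into polynomials in $t$, so $g$ is orthogonal to all polynomials on $(0,1)$ against a finite measure, forcing $g=0$ by the Weierstrass theorem (the measure has compact support, so no moment-problem subtlety arises). This is the argument the paper calls "an appropriate integral transformation to show completeness."

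The main obstacle — and the part I expect to require the most care — is the bookkeeping that simultaneously (i) keeps the approximants $K_N$ inside $H^m$ and (ii) controls the weighted-$L^2$ error, because the natural dense family obtained from the completeness argument consists of rapidly decaying functions, whereas an unconstrained combination $\sum \alpha_j (1+d_j r)^{-1}$ only decays like $r^{-1}$. Concretely, one must show that the subspace of combinations satisfying the moment cancellations forcing $O(r^{-M})$ decay, for $M$ as large as needed, is \emph{still} dense; this can be done by noting those are exactly the combinations whose Fourier symbol, written over a common denominator $\prod_j(1+d_j r)$, has a numerator vanishing to sufficiently high order relative to the denominator degree — a codimension-finite condition — and then re-running the orthogonality argument inside that subspace, or alternatively by first approximating $\widehat K$ by a function already of the right decay (using $K\in H^m$) and only then invoking density. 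A secondary technical point is justifying the differentiation-in-parameter/analyticity step rigorously in the weighted space, i.e.\ checking that $d\mapsto (1+dr)^{-1}$ is a real-analytic map into $L^2\big((0,\infty); r^{(n-2)/2}(1+r)^m dr\big)$ on a neighborhood of $d_*$ and that an orthogonal functional therefore extends to a function analytic in $d$ with a zero set having an accumulation point — standard, but worth stating precisely. Everything else (polar coordinates, the substitution $t=1/(1+d_* r)$, Weierstrass) is routine.
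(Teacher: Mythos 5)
Your overall strategy is the same as the paper's: pass to the Fourier side, reduce the theorem to a density statement for the family $s\mapsto(1+d_js^2)^{-1}$ in a one--dimensional weighted $L^2$ space, and exploit analyticity in the parameter $d$ together with the accumulation point of $\{d_j\}$. Where you genuinely diverge is the endgame of the completeness argument. The paper uses the identity theorem to conclude that $\cK(z)=\int_0^{\infty}(1+zs^2)^{-1}\hat{K}_w(s)\,ds$ vanishes on the entire right half-plane and then deconvolves: the substitution $s=e^{-q}$, $z=e^{2\lambda}$ turns $\cK$ into a convolution with $h(\lambda)=e^{\lambda}/(1+e^{2\lambda})$, whose Fourier transform $\pi/\bigl(2\sin(\tfrac{\pi}{4}(1-2i\eta))\bigr)$ never vanishes. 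You instead differentiate in $d$ at the accumulation point $d_*$, substitute $t=1/(1+d_*s^2)$, and invoke Weierstrass density of polynomials on $[0,1]$. Your route is valid and arguably more elementary (no Mellin transform, no evaluation of $\int_0^{\infty}s^{-1/2-i\eta}(1+s^2)^{-1}ds$); the paper's route buys the stronger intermediate statement $\cK\equiv 0$ on a half-plane but needs the nonvanishing of a specific Fourier transform.

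The one point where your sketch, as literally written, would fail is the ``finite measure'' claim. The bare functions $(1+d_jr)^{-1}$ need not belong to the weighted space at all (this is the paper's remark that $k_j\in L^2(\bR^n)$ only for $n\le 3$ and $k_j\notin H^m$ for $m\ge 2$), and under $t=1/(1+d_*r)$ the weight $r^{(n-2)/2}(1+r)^m\,dr$ transforms to a measure that blows up non-integrably at $t=0$; orthogonality against polynomials over an infinite measure does not yield $g=0$. You correctly flag this as the main obstacle, and your first proposed remedy is exactly the paper's device: replace $k_j$ by the explicit combinations $\phi_j$ with $\hat{\phi}_j=\hat{w}\cdot(1+d_{j+J}s^2)^{-1}$, $\hat{w}=\prod_{l=1}^{J}(1+d_ls^2)^{-1}$ and $4J>2m+n-1$, which builds the required cancellation into the basis, and prove completeness of $\{\phi_j\}$ directly. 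Once that factor is in place your argument does close: the orthogonality conditions become $\int_0^1(1-t)^kF(t)\,dt=0$ for all $k$ with $F\in L^1(0,1)$ (by Cauchy--Schwarz against $\hat{K}_w\in L^2(0,\infty)$), so the finite signed measure $F\,dt$ has all moments zero and vanishes. To make your writeup a complete proof you would need to carry out this bookkeeping explicitly rather than leave it as two alternative options.
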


%%%%%
This theorem shows that any radial integral kernel belonging to $H^m(\bR^n)$ can be approximated within $H^m(\bR^n)$ as a linear sum of Green functions.
From the definition of the Green function, $k(\cdot;d)\in L^2(\bR^n)$ holds only if $n\le 3$.
This property follows from the singularity at the origin of the Green function.
However, we can resolve the singularity by considering a linear sum of the Green functions, 
which allows us to consider an approximation for the functions in $H^m(\bR^n)$.

%%%%%
The claim of the theorem states that for any $\eps>0$, 
there exist $N\in\bN$ and $\{\alpha_j\}_{1\le j\le N}$ that guarantee the desired approximation accuracy.
We note that $\alpha_j$ depends on not only $K$ but also $N$ for any $j=1,2,\ldots,N$.
From the viewpoint of potential applications, 
it is interesting to evaluate how the approximation accuracy improves as $N$ increases.
However, the relationship between $\eps$ and $N$ is not known, and this remains an open question.

%%%%%
The approach of the proof is to show completeness for a linear span of the Green functions based on the theory of the orthogonal basis.
We use the identity theorem for analytic functions when considering completeness, and therefore we include an assumption about the accumulation point for $\{d_j\}_{j\in\bN}$.
The proof is given in Subsection \ref{subsec:ker}.

%%%%%
We discuss approximations in norms in addition to the one used in the main result.
For a nonnegative integer $l$ with $m>n/2 + l$, it is known that
\beaa
H^{m}(\bR^n) \subset C^{l}(\bR^n)
\eeaa
from the Morrey inequality.
Thus, we find a $C^{l}$-approximation when $m>n/2+l$ and $K\in H^{m}(\bR^n)$.
Moreover, for any $s\in\bR$, we obtain an $H^{s}$-approximation of a kernel belonging to $H^m(\bR^n)$ with $m>s$.
In particular, for $p>2$, we obtain an $L^{p}$-approximation from Sobolev embedding theory.
The case $p\in[1,2)$ has not yet been addressed by this result, and part of it remains an open problem.

%%%%%%
For approximating spatial convolutions using PDE systems, we have the following corollary.
\begin{corollary}\label{cor:con}
    Let $\{d_j\}_{j\in\bN}$ be a sequence of distinct positive constants that has a positive accumulation point.
    Let $s\in [0,n/2)$ and $m\in\bN\cup\{0\}$ satisfy $m\ge s$.
    When $K\in H^{m}(\bR^n)$ is a radial integral kernel, for any $\eps>0$, there exist $N\in\bN$ and constants $\{\alpha_j\}_{1\le j\le N}$ such that 
    \beaa
    K_{N}*f \in L^t(\bR^n)
    \eeaa
    and
    \beaa
    \left\| K* f - K_N*f \right\|_{L^t(\bR^n)} \le \eps \|f\|_{L^q(\bR^n)}
    \eeaa
    hold for $q,t\in[1,+\infty]$ with $1/q=1/t+1/2+s/n$ and $f\in L^{q}(\bR^n)$.
\end{corollary}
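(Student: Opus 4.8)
The plan is to deduce the corollary directly from Theorem \ref{thm:ker}, chaining a fractional Sobolev embedding with Young's convolution inequality; beyond bookkeeping of exponents, no new idea is required.

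First I would fix $s\in[0,n/2)$ and introduce the exponent $r\in[2,+\infty)$ determined by $1/r=1/2-s/n$, which is well defined precisely because $0\le s<n/2$. I would then record two elementary facts: the (fractional) Sobolev embedding $H^s(\bR^n)\hookrightarrow L^r(\bR^n)$, say with some constant $C_S=C_S(n,s)>0$ (in the degenerate case $s=0$ this is merely $H^0(\bR^n)=L^2(\bR^n)=L^r(\bR^n)$ with $C_S=1$); and, from the Bessel-potential (Fourier) characterization, the continuous inclusion $H^m(\bR^n)\subset H^s(\bR^n)$ with $\|\cdot\|_{H^s(\bR^n)}\le\|\cdot\|_{H^m(\bR^n)}$, valid since $m\ge s$. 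In particular $H^m(\bR^n)\subset L^r(\bR^n)$, so that $K$ and every $K_N$ produced by Theorem \ref{thm:ker} lie in $L^r(\bR^n)$.

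Next, given $\eps>0$, I would apply Theorem \ref{thm:ker} with target accuracy $\eps/C_S$, obtaining $N\in\bN$ and constants $\{\alpha_j\}_{1\le j\le N}$ with $K_N\in H^m(\bR^n)$ and $\|K-K_N\|_{H^m(\bR^n)}<\eps/C_S$; passing through the two inclusions gives $\|K-K_N\|_{L^r(\bR^n)}\le C_S\|K-K_N\|_{H^s(\bR^n)}<\eps$. For $q,t\in[1,+\infty]$ satisfying $1/q=1/t+1/2+s/n$ one checks $1/r+1/q=1+1/t$, so Young's convolution inequality applies: for $f\in L^q(\bR^n)$ it yields $K_N*f\in L^t(\bR^n)$ (using $K_N\in L^r(\bR^n)$) together with
\[
\|K*f-K_N*f\|_{L^t(\bR^n)}=\|(K-K_N)*f\|_{L^t(\bR^n)}\le\|K-K_N\|_{L^r(\bR^n)}\,\|f\|_{L^q(\bR^n)}<\eps\,\|f\|_{L^q(\bR^n)},
\]
and $K*f\in L^t(\bR^n)$ likewise from $K\in L^r(\bR^n)$. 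This is exactly the asserted estimate.

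Since the statement is a consequence of Theorem \ref{thm:ker}, there is no serious obstacle; the care-points are merely to verify that $r=(\tfrac12-\tfrac{s}{n})^{-1}$ lies in $[2,+\infty)$ under $s\in[0,n/2)$, to track the endpoint cases of Young's inequality (the constraint $1/q=1/t+1/2+s/n$ forces $q\in[1,2]$ and $t\in[r,+\infty]$, with $q=1$ forcing $t=r$ and $q=+\infty$ being vacuous), and to handle the degenerate case $s=0$ through the trivial inclusion $L^2(\bR^n)\hookrightarrow L^2(\bR^n)$ in place of a genuine Sobolev inequality.
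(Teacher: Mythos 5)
Your proposal is correct and follows essentially the same route as the paper: the exponent $r$ with $1/r=1/2-s/n$ is exactly the paper's $2n/(n-2s)$, and the argument is the identical chain of $H^m\subset H^s\hookrightarrow L^{2n/(n-2s)}$, Theorem \ref{thm:ker} with accuracy $\eps/C$, and Young's convolution inequality. Your additional bookkeeping of the endpoint cases is a harmless elaboration of what the paper leaves implicit.
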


%%%%%%
This result enables us to obtain the approximation possibilities of the spatial convolution using the PDE system in the sense of the $L^{t}$-norm.
In this result, it is always required that $q<t$, 
meaning that the class of input functions must be considered in a different norm than that used for approximating the spatial convolution.
The proof relies on the Young inequality, which is presented in Subsection \ref{subsec:con}.

\bigskip
%%%%%%%%%%%%%%%%%%%%
\section{Completeness of a linear span of the Green functions}\label{sec:comp}
In this section, we give the proofs of the main results.
Through this section, we set $\{d_j\}_{j\in\bN}$ as distinct positive constants.

%\medskip
%%%%%%%
%%%%%%%
\subsection{Properties of the Green function}
We introduce some properties of the Green function. 
It is known that $k(x;d)$ is represented as
\bea
k(x;d) &:=& \left(\dfrac{1}{d}\right)^{n/2} G\left( \dfrac{|x|}{\sqrt{d}} \right), \notag\\
G(|x|) &:=& \dfrac{1}{(2\pi)^{n/2}}\left( \dfrac{1}{|x|}\right)^{n/2-1} M_{n/2-1}\left(|x|\right), \label{green}
\eea
where $M_{\nu}(r)$ is the modified Bessel function of the second kind with order $\nu$ defined as
\beaa
M_{\nu}(r) := \int^{+\infty}_{0} e^{-r \cosh s} \cosh(\nu s)ds.
\eeaa

%%%%
The Fourier transform of $G(|x|)$ defined by \eqref{green} is computed as
\beaa
\hat{G}(\xi) = \mathcal{F}_n[G](\xi) &:=& \int_{\bR^n} e^{-ix\cdot \xi} G(|x|)dx \\
&=&  (2\pi)^{n/2} \left( \dfrac{1}{|\xi|}\right)^{n/2-1} \int^{+\infty}_{0} J_{n/2-1}(|\xi|r) G(r)r^{n/2}dr \\
&=&\left( \dfrac{1}{|\xi|}\right)^{n/2-1} \int^{+\infty}_{0} r J_{n/2-1}(|\xi|r) M_{n/2-1}\left(r\right)dr
\eeaa
for $|\xi|>0$,
where $J_{n/2-1}(r)$ is the Bessel function with order $n/2-1$.
As it is known from \cite[\S 13.45]{Watson} that
\beaa
\int^{+\infty}_{0} r J_{n/2-1}(|\xi|r) M_{n/2-1}\left(r\right)dr= |\xi|^{n/2-1} {}_2 F_1 \left( \dfrac{n}{2}, 1; \dfrac{n}{2} ; - |\xi|^2 \right) = \dfrac{|\xi|^{n/2-1}}{1+|\xi|^2},
\eeaa
we have
$\hat{G}(\xi) = 1/(1+|\xi|^2)$.
Here, ${}_2F_1(a,b; c ; z)$ is the hypergeometric function.

%%%%
It is easy to see that $G(|x|)$ is positive for $|x|>0$.
Using the integral formula
\beaa
\int^{+\infty}_{0} r^{\mu-1} M_{\nu}(r)dr = 2^{\mu-2} \Gamma\left(\dfrac{\mu-\nu}{2}\right) \Gamma\left(\dfrac{\mu+\nu}{2}\right)
\eeaa
with $|\mathrm{Re}(\nu)|< \mathrm{Re}(\mu)$ from \cite[\S 13.21]{Watson},
we obtain 
\beaa
\int_{\bR^{n}} G(|x|) dx &=& \dfrac{2\pi^{n/2}}{\Gamma(n/2)} \int^{+\infty}_{0} r^{n-1} G(r) dr \\
&=& \dfrac{1}{2^{n/2-1}\Gamma(n/2)} \int^{+\infty}_{0} r^{n/2} M_{n/2-1}(r) dr=1.
\eeaa
Moreover, from \cite[\S 7.10]{Tit} and the Legendre duplication formula, we have
\beaa
\int_{\bR^n} G(|x|)^2 dx &=& \dfrac{2\pi^{n/2}}{\Gamma(n/2)} \int^{+\infty}_{0} r^{n-1} G(r)^2 dr \\
&=& \dfrac{1}{2^{n-1}\pi^{n/2}\Gamma(n/2)} \int^{+\infty}_{0} r M_{n/2-1}(r)^2 dr \\
&=& \dfrac{1}{2^{n+1} \pi^{(n-1)/2}\Gamma(3/2)} \Gamma\left(2-\dfrac{n}{2}\right)
\eeaa
when $n\le 3$.

%%%%
From these properties of $G(|x|)$, 
we obtain the following properties for $k_j$.
%%%%
\begin{lemma}\label{lem:green}
    For $j\in\bN$, we have
    \begin{itemize}
        \item[(i)] $k_j\in C(\bR^n\backslash\{0\})\cap L^1(\bR^n)$ and
        \beaa
        \mathcal{F}_n[k_j](s)= \dfrac{1}{1+d_j |\xi|^2};
        \eeaa
        \item[(ii)] $k_j(x)>0\ (x\neq 0)$ and $\|k_j\|_{L^1(\bR^n)}=1$;
        \item[(iii)] $k_j \in L^2(\bR^n)$ and 
        \beaa
        \| k_j \|^2_{L^2(\bR^n)} = d^{-n/2}_j \| G\|^2_{L^2(\bR^n)}
        \eeaa
        when $n\le 3$.
    \end{itemize}
\end{lemma}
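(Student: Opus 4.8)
\emph{Proof proposal.} The plan is to reduce every assertion to the scaling identity $k_j(x) = k(x;d_j) = d_j^{-n/2}G\!\left(|x|/\sqrt{d_j}\right)$ from \eqref{green}, together with the properties of $G$ already recorded above: namely that $G(|x|)>0$ for $|x|>0$, that $\int_{\bR^n}G(|x|)\,dx = 1$ (so in particular $G\in L^1(\bR^n)$), that $\hat G(\xi) = 1/(1+|\xi|^2)$, and that $\int_{\bR^n}G(|x|)^2\,dx$ is finite with the explicit value computed above when $n\le 3$. Since $x\mapsto |x|/\sqrt{d_j}$ is smooth on $\bR^n\setminus\{0\}$ and multiplication by $d_j^{-n/2}$ is harmless, all regularity, positivity, and integrability statements for $k_j$ will transfer from the corresponding statements for $G$ by a change of variables $x=\sqrt{d_j}\,y$.

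For part (i), I would first note that $M_{n/2-1}(r)$ is continuous on $(0,+\infty)$ — immediate from its integral representation by dominated convergence — so that $G$ is continuous on $(0,+\infty)$ and hence $x\mapsto G(|x|)$, and therefore $k_j$, is continuous on $\bR^n\setminus\{0\}$. Membership $k_j\in L^1(\bR^n)$ and the identity $\|k_j\|_{L^1(\bR^n)}=1$ both follow from $\int_{\bR^n}|k_j(x)|\,dx = \int_{\bR^n}G(|y|)\,dy = 1$ after the substitution $x=\sqrt{d_j}\,y$, and positivity $k_j(x)>0$ for $x\neq 0$ is inherited directly from that of $G$. The Fourier transform is then obtained from the dilation rule $\mathcal{F}_n[G(\cdot/\sqrt{d_j})](\xi) = d_j^{n/2}\hat G(\sqrt{d_j}\,\xi)$, which gives $\mathcal{F}_n[k_j](\xi) = \hat G(\sqrt{d_j}\,\xi) = 1/(1+d_j|\xi|^2)$; this settles (i) and (ii).

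For part (iii), the same substitution $x=\sqrt{d_j}\,y$ yields $\int_{\bR^n}k_j(x)^2\,dx = d_j^{-n/2}\int_{\bR^n}G(|y|)^2\,dy = d_j^{-n/2}\|G\|_{L^2(\bR^n)}^2$, and the right-hand side is finite exactly when $n\le 3$, by the evaluation of $\int_{\bR^n}G(|x|)^2\,dx$ recorded above.

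I do not expect a genuine obstacle here: the substantive analytic work — the integral identities for $M_\nu$ taken from \cite{Watson} and \cite{Tit}, and the evaluation of $\hat G$ — has already been carried out in the preceding discussion, so proving the lemma amounts to bookkeeping with dilations. The only points that warrant a line of care are the continuity of $M_{n/2-1}$ on the positive half-line (to get $k_j\in C(\bR^n\setminus\{0\})$) and keeping track of the Jacobian factors $d_j^{\pm n/2}$ both in the change of variables and in the Fourier dilation rule.
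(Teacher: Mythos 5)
Your proposal is correct and follows the same route as the paper, which gives no separate proof of this lemma but derives it exactly as you do: from the scaling identity $k_j(x)=d_j^{-n/2}G(|x|/\sqrt{d_j})$ together with the previously established facts $\hat G(\xi)=1/(1+|\xi|^2)$, $\int_{\bR^n}G(|x|)\,dx=1$, positivity of $G$, and the finiteness of $\|G\|_{L^2(\bR^n)}$ for $n\le 3$. The Jacobian bookkeeping and the Fourier dilation rule are handled correctly.
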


\medskip
%%%%%%%%
\subsection{Linear span of the Green functions}
To consider only radial functions, we identify $K(x)$ with $K(|x|)$.
We denote the set of all radial functions in $H^{m}(\bR^n)$ by $H^m_{r}(\bR^n)$.
Here, we set the norm as
\begin{align*} 
\| K \|^2_{H^m_r(\bR^n)}:=\int^{+\infty}_{0} s^{n-1} (1+s^2)^{m} |\hat{K}(s)|^2 ds,    
\end{align*}
where $\hat{K}(s)$ is the Fourier transform of $K(|x|)$.
The space of radial functions in $L^2(\bR^n)$ is defined by $L^2_r(\bR^n) := H^{0}_r(\bR^n)$.

%%%%
The inner product for $H^{m}_r(\bR^n)$ is defined by
\beaa
\lng K_1,K_2 \rng_{H^m_r(\bR^n)} := \int^{+\infty}_{0} s^{n-1} (1+s^2)^m \hat{K_1}(s) \hat{K_2}(s) ds\quad (K_1,K_2\in H^m_r(\bR^n)).
\eeaa
In this section, we adopt the notation $r=|x|$ and $s=|\xi|$, and $\hat{\cdot}$ denotes the Fourier transform.
This subsection constructs an orthonormal basis of $H^m_r(\bR^n)$ generated by the set $\{k_j\}_{j\in\bR}$.

%%%%%
\begin{lemma}\label{lem:ind}
$\{k_j\}_{j\in\bN}$ is linearly independent.
\end{lemma}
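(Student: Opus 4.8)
The plan is to prove linear independence of $\{k_j\}_{j\in\bN}$ by passing to the Fourier side, where each $k_j$ has the explicit transform $\widehat{k_j}(\xi) = 1/(1+d_j|\xi|^2)$ from Lemma~\ref{lem:green}(i). Since the Fourier transform is injective, a nontrivial linear relation $\sum_{j=1}^N c_j k_j = 0$ in $L^2(\bR^n)$ would force
\[
\sum_{j=1}^N \frac{c_j}{1+d_j s^2} = 0 \qquad \text{for a.e. } s = |\xi| \ge 0,
\]
and because the left-hand side is a rational function of $s^2$ (hence real-analytic away from its poles), this identity must hold for \emph{all} $s \ge 0$, and in fact extends to all complex values of the variable $z = s^2$ outside the finite pole set.

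Next I would exploit the pole structure. Setting $z = s^2$ and writing $R(z) := \sum_{j=1}^N c_j/(1+d_j z)$, this is a rational function whose only possible poles are the distinct points $z = -1/d_j$ (distinct because the $d_j$ are distinct). If $R \equiv 0$ as a rational function, then for each fixed $j_0$ we can compute the residue at $z = -1/d_j$: multiplying by $(1 + d_{j_0} z)$ and letting $z \to -1/d_{j_0}$ kills every term except the $j_0$-th and yields $c_{j_0}/d_{j_0} = 0$, hence $c_{j_0} = 0$. Since $j_0$ was arbitrary, all coefficients vanish, which is the desired contradiction. (Equivalently, one can clear denominators to get a polynomial identity $\sum_j c_j \prod_{i\neq j}(1+d_i z) = 0$ and evaluate at $z = -1/d_{j_0}$.)

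The one point that needs a little care — and is the only genuine obstacle — is the passage from "the identity holds a.e.\ on $[0,\infty)$" to "the rational function $R(z)$ is identically zero". This is where I would invoke that a finite linear combination of the functions $z \mapsto 1/(1+d_j z)$ is meromorphic on $\bC$ with at most $N$ poles; agreeing with $0$ on a set of positive measure (in particular on an interval) forces it to vanish identically by the identity theorem for analytic functions, applied on the connected domain $\bC \setminus \{-1/d_1,\dots,-1/d_N\}$. No accumulation-point hypothesis on $\{d_j\}$ is needed here, since we are only dealing with finitely many $d_j$ at a time; that hypothesis enters later (in Theorem~\ref{thm:ker}) for the completeness argument, not for linear independence. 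I would also remark that the same argument works verbatim in $H^m_r(\bR^n)$ rather than $L^2_r(\bR^n)$, since a vanishing linear combination in the larger space is in particular a vanishing combination of $L^1 \cap L^2$ functions (by Lemma~\ref{lem:green}) and the Fourier-side reasoning is unchanged.
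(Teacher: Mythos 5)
Your proof is correct, and it takes a genuinely different route at the decisive step. Both you and the paper begin identically: pass to the Fourier side via Lemma~\ref{lem:green}(i) to reduce the statement to the vanishing of $\sum_{j} c_j/(1+d_j s^2)$ on $[0,\infty)$. From there the paper pairs this identity against each $1/(1+d_l s^2)$ and integrates, producing the linear system $\sum_j c_j/(\sqrt{d_j}+\sqrt{d_l})=0$ whose matrix is a Cauchy matrix with nonzero determinant; invertibility then kills all coefficients. You instead read off the coefficients directly from the pole structure of the rational function $R(z)=\sum_j c_j/(1+d_j z)$ at the distinct points $z=-1/d_j$, which is arguably more elementary (no determinant formula needed) and makes transparent exactly where distinctness of the $d_j$ is used. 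The paper's integration route is not wasted effort, though: the same integral $\int_0^\infty ds/((1+d_js^2)(1+d_ls^2))=\pi/(2(\sqrt{d_j}+\sqrt{d_l}))$ is precisely the Gram matrix entry $a_{j,l}$ that reappears in Section~4 for $n=1$, so the Cauchy-matrix computation does double duty. Your side remarks are also accurate: the a.e.-to-everywhere upgrade is harmless (both sides are continuous, or one argues by analyticity as you do), the accumulation-point hypothesis on $\{d_j\}$ is indeed irrelevant here and only enters in the completeness argument of Lemma~\ref{lem:ide}, and the argument is insensitive to which space ($L^2_r$ or $H^m_r$) the vanishing linear combination is taken in.
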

%%%%
\begin{proof}
For $J\in\bN$ and $\{\gamma_j\}_{1\le j \le J}\subset\bR$, we assume that 
\beaa
\dsum^{J}_{j=1} \gamma_j k_j(r) =0
\eeaa
holds for all $r>0$.
By taking the Fourier transform and using Lemma \ref{lem:green} (i),
we obtain
\beaa
\dsum^{J}_{j=1} \dfrac{\gamma_j}{1+ d_j s^2} = 0
\eeaa
for all $s\ge 0$.
Multiplying by $\dfrac{1}{1+d_l s^2}\ (1\le l \le J)$ and integrating over $[0,+\infty)$ leads to
the simultaneous equation
\beaa
\dsum^{J}_{j=1} \dfrac{\gamma_j}{\sqrt{d_j}+ \sqrt{d_l} } = 0\quad (1\le l \le J),
\eeaa
because we have
\beaa
\int^{+\infty}_{0} \dfrac{ds}{(1+d_j s^2)(1+d_l s^2)} = \dfrac{\pi}{2(\sqrt{d_j}+\sqrt{d_l})}\quad (1\le j,l\le J).
\eeaa
Here, the matrix $\{1/(\sqrt{d_j}+ \sqrt{d_l}) \}_{1\le j,l\le J}$ is a Cauchy matrix, 
and the determinant can thus be computed according to
\beaa
\det \left( \left\{ \dfrac{1}{\sqrt{d_j}+ \sqrt{d_l}} \right\}_{1\le j,l\le J} \right) = 
\left( \prod^{J}_{j,l=1} (\sqrt{d_j}+\sqrt{d_l}) \right)^{-1} \left( \prod_{1\le j < l \le J} (\sqrt{d_l} - \sqrt{d_j})^2 \right)\neq 0.
\eeaa
That is to say, the matrix is invertible.
Therefore, $\gamma_j =0$ holds for all $j=1,\ldots,J$.
\end{proof}

\medskip
%%%%%
This allows us to construct a space whose basis is $\{k_j\}_{j\in\bN}$.
However, $k_j$ does not always belong to $H^{m}_r(\bR^n)$.
Thus, we fix a non-negative integer $J$ satisfying 
\be\label{condiJ}
4J > 2m + n -1
\ee 
and define two functions as
\beaa
\hat{w}(s) := \prod^{J}_{l=1} \dfrac{1}{1+ d_l s^2},
\quad
\hat{\phi_j}(s) :=  \hat{w}(s) \dfrac{1}{1+d_{j+J}s^2}.
\eeaa
It is easy to see that $\phi_j$ is represented by
\be\label{phi}
\phi_j(r) = \gamma_0 k_{j+J}(r) + \dsum^{J}_{l=1} \gamma_l k_l(r),
\ee
where $\gamma_{l}\ (0\le l \le J)$ are non-zero constants given by
\beaa
\gamma_0 = \prod^{J}_{l=1} \left(1 - \dfrac{d_l}{d_{j+J}}\right)^{-1},\quad \gamma_{l} =  \left(1 - \dfrac{d_{j+J}}{d_{l}}\right)^{-1} \prod^{J}_{1\le l^{*}\le J,\ l^{*}\neq l} \left(1 - \dfrac{d_{l^{*}}}{d_{l}}\right)^{-1}.
\eeaa
We then see that $\phi_j$ has better regularity than $k_j$ and serves as the basis, as established in the following lemma.

%%%%%%%%%%%%
\begin{lemma}
$\{\phi_j\}_{j\in\bN}$ is linearly independent. Moreover, $\{\phi_j\}_{j\in\bN} \subset H^{m}_r(\bR^n)$ holds.
\end{lemma}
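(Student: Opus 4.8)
The plan is to argue entirely on the Fourier side, exploiting that $\hat{\phi_j}(s) = \hat{w}(s)/(1+d_{j+J}s^2)$ with $\hat{w}(s) = \prod_{l=1}^{J}(1+d_l s^2)^{-1} > 0$ for every $s \ge 0$. For the linear independence, suppose $\sum_{j=1}^{M} c_j \phi_j = 0$ for some $M \in \bN$ and real constants $\{c_j\}_{1\le j\le M}$. Taking the Fourier transform and using Lemma \ref{lem:green}(i) together with the definition of $\hat{\phi_j}$ gives $\hat{w}(s)\sum_{j=1}^{M} c_j/(1+d_{j+J}s^2) = 0$ for all $s \ge 0$. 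Since $\hat{w}(s) > 0$, this forces $\sum_{j=1}^{M} c_j/(1+d_{j+J}s^2) = 0$ for all $s \ge 0$, which by Lemma \ref{lem:green}(i) is the Fourier transform of $\sum_{j=1}^{M} c_j k_{j+J}$; hence $\sum_{j=1}^{M} c_j k_{j+J} = 0$. Because $\{d_{j+J}\}_{j\in\bN}$ is again a sequence of distinct positive constants, Lemma \ref{lem:ind} applied to the shifted subfamily $\{k_{j+J}\}_{j\in\bN}$ (its Cauchy-matrix proof goes through verbatim) yields $c_j = 0$ for all $j$.

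For the inclusion $\{\phi_j\}_{j\in\bN} \subset H^m_r(\bR^n)$, I would first observe that $\phi_j$ is radial, since $\hat{\phi_j}$ depends on $\xi$ only through $s = |\xi|$ (equivalently, by the representation \eqref{phi} as a linear combination of the radial functions $k_l$). Next, since $1 + d_l s^2 \ge \min(1,d_l)(1+s^2)$ for all $s \ge 0$, there is a constant $C_j > 0$ depending only on $d_1,\dots,d_J,d_{j+J}$ such that $|\hat{\phi_j}(s)|^2 = \prod_{l=1}^{J}(1+d_l s^2)^{-2}\cdot(1+d_{j+J}s^2)^{-2} \le C_j(1+s^2)^{-2(J+1)}$. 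Plugging this into the norm on $H^m_r(\bR^n)$ gives $\|\phi_j\|_{H^m_r(\bR^n)}^2 = \int_0^{+\infty} s^{n-1}(1+s^2)^{m}|\hat{\phi_j}(s)|^2\,ds \le C_j\int_0^{+\infty} s^{n-1}(1+s^2)^{m-2(J+1)}\,ds$. Near $s=0$ the integrand is $O(s^{n-1})$, hence integrable since $n \ge 1$; near $s=+\infty$ it behaves like $s^{\,n-1+2m-4(J+1)}$, so the integral converges provided $n-1+2m-4(J+1) < -1$, i.e.\ $4J > 2m+n-4$. This is implied by the standing assumption $4J > 2m+n-1$ from \eqref{condiJ}, so $\phi_j \in H^m_r(\bR^n)$.

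I do not expect a genuine obstacle here. The only point requiring a little care is the exponent bookkeeping in the convergence-at-infinity step, where one checks that \eqref{condiJ} is in fact (slightly) stronger than what membership in $H^m_r(\bR^n)$ alone needs; the extra margin reflects that \eqref{condiJ} is chosen so that even the individual Green functions $k_l$, once multiplied by the regularizing factor $\hat{w}$, still land in $H^m_r(\bR^n)$. The strict positivity of $\hat{w}$, which legitimizes cancelling it in the linear-independence step, is immediate from $d_l > 0$.
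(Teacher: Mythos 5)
Your proof is correct and follows essentially the same route as the paper's: linear independence is reduced to Lemma \ref{lem:ind} (you do this by cancelling the strictly positive factor $\hat{w}(s)$ on the Fourier side, the paper by reading off coefficients from the representation \eqref{phi}, but both rest on the same lemma), and membership in $H^m_r(\bR^n)$ is obtained from a pointwise bound $|\hat{\phi_j}(s)|^2 \lesssim (1+s^2)^{-2(J+1)}$ together with the integrability check at infinity. Your observation that \eqref{condiJ} is slightly stronger than the condition $4J>2m+n-4$ actually needed here is accurate; the full strength of \eqref{condiJ} is used later (for the bound on $\int_0^{+\infty}|\hat{K}_w(s)|^2\,ds$), not in this lemma.
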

%%%
\begin{proof}
The linear independence of $\{\phi_j\}_{j\in\bN}$ follows from the linear independence of $\{k_j\}_{j\in\bN}$.
%%%
Fix $j\in\bR$. Let $d^{*}_j:= \min\{d_1,\ldots,d_J,d_{j+J}\}$.
As we have
\beaa
0\le \hat{\phi_j}(s) \le \left( \dfrac{1}{1+d^{*}_js^2} \right)^{J+1}
\eeaa
for any $s\ge 0$,
we obtain
\beaa
\|\phi_j\|^2_{H^{m}_r(\bR^n)} 
\le \int^{+\infty}_{0} s^{n-1}(1 +s^2)^{m} (1+d^{*}_js^2)^{-2(J+1)} ds < + \infty
\eeaa
from \eqref{condiJ}.
\end{proof}

\medskip
%%%%%%
As $\{\phi_j\}_{j\in\bN}\subset H^{m}_r(\bR^n)$ is linearly independent, we can construct an orthonormal basis $\{\psi_j\}_{j\in\bN}$ on $H^{m}_r(\bR^n)$ to satisfy
\be\label{GS}
\mathrm{span}\{\phi_1,\ldots,\phi_j\} = \mathrm{span}\{\psi_1,\ldots,\psi_j\}
\ee
for any $j\in\bN$
adopting the Gram-Schmidt orthogonalization method.

%%%%%%
For any $j^{*}\in\bN$, 
$\psi_{j^{*}}$ can be written as a linear sum of $\{\phi_j\}_{1\le j\le j^{*}}$. Furthermore, $\phi_j$ is written as a linear sum of $\{k_j\}_{j\in\bN}$ in the form of \eqref{phi}.
Therefore, establishing the completeness of $\{\psi_j\}_{j\in\bN}$ ensures
an approximation by $\{k_j\}_{j\in\bN}$.
We use the following characterization to prove completeness.

%%%%%%%
\begin{lemma}\label{lem:comp}
Supposing that $K=0$ holds if $K\in H^m_r(\bR^n)$ satisfies $\lng K, \psi_j\rng_{H^{m}_r(\bR^n)}=0$ for all $j\in\bN$.
The orthonormal basis $\{\psi_j \}_{j\in\bN}\subset H^{m}_r(\bR^n)$ is then complete in $H^{m}_r(\bR^n)$.
\end{lemma}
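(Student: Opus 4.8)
The plan is to observe that this lemma is essentially the standard characterization of completeness for an orthonormal system in a Hilbert space, specialized to the setting $\mathcal{H} = H^m_r(\bR^n)$ with the inner product $\lng \cdot,\cdot\rng_{H^m_r(\bR^n)}$. First I would recall that $H^m_r(\bR^n)$ is a Hilbert space: the map $K\mapsto \hat K$ identifies it isometrically (up to the fixed constant from the spherical-measure normalization) with the weighted space $L^2((0,+\infty), s^{n-1}(1+s^2)^m\,ds)$, which is complete, so Bessel's inequality, convergence of orthogonal series, and the projection theorem are all available.

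Next I would argue by contraposition in the natural direction. Suppose $\{\psi_j\}_{j\in\bN}$ is \emph{not} complete, i.e. its closed linear span $V := \overline{\mathrm{span}}\{\psi_j : j\in\bN\}$ is a proper closed subspace of $H^m_r(\bR^n)$. By the projection theorem there exists a nonzero $K\in H^m_r(\bR^n)$ lying in the orthogonal complement $V^{\perp}$; in particular $\lng K,\psi_j\rng_{H^m_r(\bR^n)} = 0$ for every $j\in\bN$, while $K\neq 0$. This contradicts the hypothesis of the lemma, which asserts that any such $K$ must vanish. Hence $\{\psi_j\}_{j\in\bN}$ is complete, meaning $\overline{\mathrm{span}}\{\psi_j\}_{j\in\bN} = H^m_r(\bR^n)$, equivalently every $K\in H^m_r(\bR^n)$ equals the sum of its Fourier series $\sum_j \lng K,\psi_j\rng_{H^m_r(\bR^n)}\,\psi_j$ with Parseval's identity holding.

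There is essentially no obstacle here; the only point requiring a line of care is verifying that $H^m_r(\bR^n)$ endowed with $\lng\cdot,\cdot\rng_{H^m_r(\bR^n)}$ is genuinely a Hilbert space (completeness of the weighted $L^2$ space under the Plancherel-type identification), so that the projection theorem applies. Once that is in hand, the proof is a one-paragraph invocation of the orthogonal-decomposition characterization of completeness. I would therefore write the proof as: state that $H^m_r(\bR^n)$ is a Hilbert space, invoke the standard equivalence (completeness of an orthonormal system $\iff$ its orthogonal complement is trivial), and conclude immediately from the stated hypothesis that the complement is $\{0\}$, hence the system is complete.
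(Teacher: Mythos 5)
Your proposal is correct: it is the standard Hilbert-space characterization of completeness (trivial orthogonal complement plus the projection theorem), which is exactly the fact the paper relies on --- indeed the paper states this lemma without proof, treating it as standard. Your additional care about verifying that $H^m_r(\bR^n)$ with the given inner product is genuinely a Hilbert space (via the isometry with the weighted $L^2$ space on $(0,+\infty)$) is a reasonable point that the paper leaves implicit.
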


\medskip
%%%%%%%%
\subsection{Completeness of the orthonormal basis}

Let $m$ be a non-negative integer
and $\{d_j\}_{j\in\bN}$ be a sequence of distinct positive constants that has a positive accumulation point.
Suppose that $K\in H^{m}_r(\bR^n)$ satisfies $\lng K, \psi_j\rng_{H^m_r(\bR^n)} =0$ for all $j\in\bN$.
From \eqref{GS},
it follows that $\lng K, \phi_j\rng_{H^m_r(\bR^n)} =0$ holds for all $j\in\bN$.

%%%
Define two functions as 
\beaa
\hat{K}_{w}(s):= s^{n-1}(1+s^2)^{m} \hat{w}(s) \hat{K}(s), \quad 
\mathcal{K}(z) := \int^{+\infty}_{0} \dfrac{1}{1+zs^2} \hat{K}_w(s)ds\quad (z\in\bC).
\eeaa
Then, although $K_w$ also depends on $m$ and $n$, we have 
\beaa
\int^{+\infty}_{0}  |\hat{K}_w(s)|^2  ds \le \left( \sup_{s\ge 0} s^{n-1}(1+s^2)^{m} |\hat{w}(s)|^2 \right) \|K \|^2_{H^m_r(\bR^n)} < +\infty
\eeaa
from \eqref{condiJ}.
We see that 
\beaa
0= \lng K, \phi_j\rng_{H^m_r(\bR^n)} = \int^{+\infty}_{0} \dfrac{1}{1+d_{j+J} s^2} \hat{K}_w(s)ds = \cK(d_{j+J}).
\eeaa
Moreover, $\mathcal{K}$ is well defined on $\{z\in\bC\ \mid\ \mathrm{Re}(z)>0\}$ because we find
\beaa
|\cK(z)| &\le& \int^{+\infty}_{0} \dfrac{1}{|1+zs^2|} |\hat{K}_w(s)| ds \\ 
&\le& \int^{+\infty}_{0} \dfrac{1}{1+ \mathrm{Re}(z) s^2} |\hat{K}_w(s)| ds
\le \dfrac{\pi}{4 \sqrt{\mathrm{Re}(z)}} \left( \int^{+\infty}_{0}  |\hat{K}_w(s)|^2  ds \right)^{1/2}.
\eeaa
We obtain the following equality from the identity theorem.
%%%%%%%%%%
\begin{lemma}\label{lem:ide}
Suppose that $\lng K, \phi_j\rng_{H^m_r(\bR^n)} =0$ holds for all $j\in\bN$.
We then have $\cK (z)\equiv 0$ on $\{z\in\bC \mid \mathrm{Re}(z)>0\}$.
\end{lemma}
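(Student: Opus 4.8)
The plan is to apply the identity theorem for holomorphic functions on the connected open set $\Omega := \{z\in\bC \mid \mathrm{Re}(z)>0\}$. Two things must be checked: first, that $\cK$ is holomorphic on $\Omega$; second, that the set of zeros of $\cK$ already recorded, namely $\{d_{j+J}\}_{j\in\bN}$, has an accumulation point inside $\Omega$. Granting both, the identity theorem immediately forces $\cK\equiv 0$ on $\Omega$.

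For the zero set: by the computation preceding the lemma, $\cK(d_{j+J}) = \lng K,\phi_j\rng_{H^m_r(\bR^n)} = 0$ for every $j\in\bN$. The sequence $\{d_{j+J}\}_{j\in\bN}$ differs from $\{d_j\}_{j\in\bN}$ by only finitely many terms, so it inherits the positive accumulation point $d_*>0$; since $d_*>0$ we have $d_*\in\Omega$, and since the $d_j$ are distinct there is a genuine sequence of distinct points of $\{d_{j+J}\}_{j\in\bN}$, all zeros of $\cK$, converging to $d_*\in\Omega$.

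For holomorphy I would argue by Morera's theorem. Continuity of $\cK$ on $\Omega$ follows from dominated convergence: on any compact $Q\subset\Omega$ one has $\mathrm{Re}(z)\ge\delta>0$, hence $|1+zs^2|^{-1}\le(1+\delta s^2)^{-1}$, which together with $\hat K_w\in L^2(0,+\infty)$ (as established above, using \eqref{condiJ}) provides a fixed integrable majorant. For any closed triangle $\Delta\subset\Omega$, the function $(z,s)\mapsto |1+zs^2|^{-1}|\hat K_w(s)|$ is integrable on $\partial\Delta\times(0,+\infty)$, so Fubini's theorem gives
\[
\oint_{\partial\Delta}\cK(z)\,dz = \int^{+\infty}_{0}\left(\oint_{\partial\Delta}\dfrac{dz}{1+zs^2}\right)\hat K_w(s)\,ds = 0,
\]
because for each fixed $s\ge 0$ the map $z\mapsto 1/(1+zs^2)$ is holomorphic on $\Omega$ (its only singularity is at $z=-1/s^2\notin\Omega$, and for $s=0$ it is constant), so the inner contour integral vanishes by Cauchy's theorem. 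Morera's theorem then yields holomorphy of $\cK$ on $\Omega$.

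The step requiring the most care — though it is bookkeeping rather than a genuine difficulty — is the justification of holomorphy: one must exhibit the locally uniform integrable majorant needed for the Fubini/Morera argument, and one must note that the accumulation point persists when passing to the tail $\{d_{j+J}\}_{j\in\bN}$ and, crucially, lies strictly inside $\Omega$ rather than on its boundary. This last point is exactly why the hypothesis insists the accumulation point be \emph{positive}.
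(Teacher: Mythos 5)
Your proof is correct and follows the same overall strategy as the paper: show that $\cK$ is holomorphic on the right half-plane, note that it vanishes at the points $d_{j+J}$, which accumulate at a positive (hence interior) point of the half-plane, and invoke the identity theorem. The only difference is in the holomorphy step, which you establish via Morera's theorem and Fubini with an explicit integrable majorant, whereas the paper computes the complex derivative directly from difference quotients; both are standard, and your route makes the interchange of limit and integral fully explicit.
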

%%%
\begin{proof}
For $z_1,z_2\in\{z\in\bC \mid \mathrm{Re}(z)>0\}$ with $z_1\neq z_2$,
we deduce
\beaa
\dfrac{\cK(z_1)-\cK(z_2)}{z_1-z_2} 
&=& \dfrac{1}{z_1-z_2}  \int^{+\infty}_{0}  \hat{K}_w(s) \left[ \dfrac{1}{1+z_1 s^2} - \dfrac{1}{1+z_2 s^2} \right]  ds \\
&=& - \int^{+\infty}_{0} \dfrac{ s^{2} \hat{K}_w(s) }{(1+z_1 s^2)(1+z_2 s^2)} ds.
\eeaa
Thus, for any $\mathrm{Re}(z)>0$, we obtain
\beaa
\dfrac{d\cK}{dz}(z) &=& - \int^{+\infty}_{0} \dfrac{  s^{2} \hat{K}_w(s) }{(1+z s^2)^2} ds.
\eeaa
Therefore, $\cK$ is analytic on $\{z\in\bC \mid \mathrm{Re}(z)>0\}$.
As $\{d_j\}_{j\in\bN}$ has a positive accumulation point,
$\cK(z)\equiv 0$ holds from the identity theorem.
\end{proof}

\medskip
%%%%%%%%%%%%%%%%
$\cK$ can be represented as a Mellin convolution with appropriate variable transformations.
We thus apply the convolution theorem in the sense of the Mellin transform and obtain the following result.

%%%%%%%%%%%%%%%%%%
\begin{lemma}\label{lem:zero}
If $\cK(z)\equiv 0$ on $\{z\in\bC \mid \mathrm{Re}(z)>0\}$, then $K= 0$ holds.
\end{lemma}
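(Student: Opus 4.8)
The plan is to recognize $\cK$, after two elementary substitutions, as a Mellin convolution against the fixed kernel $x\mapsto(1+x)^{-1}$, whose Mellin transform $\pi/\sin(\pi\sigma)$ is holomorphic and \emph{nowhere vanishing} on a suitable vertical strip; the Mellin convolution theorem then forces the transform of the remaining factor to vanish, and inverting gives $\hat{K}_w\equiv 0$, hence $K=0$. Concretely, I would first restrict to real $z=\rho\in(0,+\infty)$ and substitute $t=s^{2}$ in the definition of $\cK$, obtaining
\beaa
\cK(\rho)=\frac{1}{2}\int_{0}^{+\infty}\frac{F(t)}{1+\rho t}\,dt,\qquad F(t):=\frac{\hat{K}_{w}(\sqrt t)}{\sqrt t}.
\eeaa
The same substitution turns the bound $\int_{0}^{+\infty}|\hat{K}_{w}(s)|^{2}\,ds<+\infty$ established above into $\int_{0}^{+\infty}|F(t)|^{2}\,t^{1/2}\,dt=2\int_{0}^{+\infty}|\hat{K}_{w}(s)|^{2}\,ds<+\infty$, so $F\in L^{2}((0,+\infty),t^{1/2}\,dt)$; equivalently, with $c:=3/4\in(0,1)$, the map $x\mapsto F(e^{x})e^{cx}$ lies in $L^{2}(\bR)$, so the $L^{2}$-Mellin transform $\mathcal{M}[F](\sigma)$ is well defined and injective on the line $\mathrm{Re}\,\sigma=c$.

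Next I would set $v:=1/\rho$ and $\Phi(v):=\int_{0}^{+\infty}F(t)/(v+t)\,dt$, and observe that $\Phi(v)=2\cK(1/v)/v$, so the hypothesis $\cK\equiv 0$ gives $\Phi\equiv 0$ on $(0,+\infty)$. Writing $1/(v+t)=t^{-1}\Psi(v/t)$ with $\Psi(x):=(1+x)^{-1}$ exhibits $\Phi=\Psi\star F$ as a Mellin convolution. Since $\Psi\in L^{1}((0,+\infty),t^{c-1}\,dt)$ (indeed $\int_{0}^{+\infty}t^{c-1}(1+t)^{-1}\,dt=\pi/\sin(\pi c)<+\infty$ for $0<c<1$) and $F\in L^{2}((0,+\infty),t^{2c-1}\,dt)$ as above, a Young-type estimate — most transparently obtained via $t=e^{x}$, under which $\star$ becomes ordinary convolution on $\bR$ and the weighted spaces become $L^{1}(\bR)$ and $L^{2}(\bR)$ — justifies the Mellin convolution theorem on the line $\mathrm{Re}\,\sigma=c$:
\beaa
0=\mathcal{M}[\Phi](\sigma)=\mathcal{M}[\Psi](\sigma)\,\mathcal{M}[F](\sigma)\qquad(\mathrm{Re}\,\sigma=c).
\eeaa

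Since $\mathcal{M}[\Psi](\sigma)=\int_{0}^{+\infty}x^{\sigma-1}(1+x)^{-1}\,dx=\pi/\sin(\pi\sigma)$ is holomorphic and never zero on the strip $0<\mathrm{Re}\,\sigma<1$, which contains the line $\mathrm{Re}\,\sigma=c$, we get $\mathcal{M}[F]\equiv 0$ on that line; injectivity of the Mellin transform (equivalently, of the Fourier transform after $t=e^{x}$) then yields $F=0$ a.e. Hence $\hat{K}_{w}=0$ a.e., and since $s^{n-1}(1+s^{2})^{m}\hat{w}(s)>0$ for every $s>0$, this forces $\hat{K}=0$ a.e., i.e. $\|K\|_{H^{m}_{r}(\bR^{n})}=0$ and $K=0$.

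I expect the only genuine obstacle to be the bookkeeping that makes the Mellin convolution theorem rigorous in this weighted setting: choosing an admissible strip, verifying the integrability hypotheses, and justifying the interchange of integrations (equivalently, transporting Young's inequality through the exponential substitution). The rest — the substitutions, the evaluation $\mathcal{M}[\Psi](\sigma)=\pi/\sin(\pi\sigma)$, and above all its non-vanishing on $0<\mathrm{Re}\,\sigma<1$, which is the conceptual heart — is routine. Alternatively one could invoke the known injectivity of the Stieltjes transform $F\mapsto\int_{0}^{+\infty}F(t)/(v+t)\,dt$ directly, but the Mellin route keeps everything within the $L^{2}$ framework already set up.
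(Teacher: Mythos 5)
Your proposal is correct and is essentially the paper's own argument: the paper likewise recasts $\cK$ as a Mellin convolution (implemented via the exponential substitution $s=e^{-q}$, turning it into an ordinary convolution $h*\tilde K$ on $\bR$ with $h\in L^1$, $\tilde K\in L^2$), applies the Fourier convolution theorem, and concludes from the non-vanishing of the kernel's transform $\pi/\bigl(2\sin(\tfrac{\pi}{4}(1-2i\eta))\bigr)$ that $\hat K_w=0$, hence $K=0$. Your extra substitution $t=s^2$ and the choice $c=3/4$ merely relocate the same sine-reciprocal non-vanishing to the strip $0<\mathrm{Re}\,\sigma<1$; the substance is identical.
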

%%%%
\begin{proof}
For simplicity, the proof is based on the Fourier transform.
Let $\tilde{K}(q):= e^{-q/2} \hat{K}_w(e^{-q})$.
We then deduce that
\be\label{inte}
\|\tilde{K}\|^{2}_{L^2(\bR)} =\int_{\bR}  |\tilde{K}(q)|^2 dq = \int^{+\infty}_{0} |\hat{K}_w(s)|^2 ds   < +\infty.
\ee
Thus, $\tilde{K}\in L^2(\bR)$ holds.

%%%
Let \[
h(\lambda) := e^{\lambda} / ( 1+e^{2\lambda}).
\]
For $\lambda\in\bR$, we have
\beaa
0 = \cK(e^{2\lambda})e^{\lambda/2} = \int^{+\infty}_{0} \dfrac{\hat{K}_w(s) e^{\lambda/2}}{1+e^{2\lambda} s^2} ds 
= \int^{+\infty}_{-\infty} \dfrac{e^{(\lambda-q)/2} \tilde{K}(q)}{1+e^{2(\lambda-q)}} dq = (h*\tilde{K})(\lambda).
\eeaa
It follows from $h\in L^1(\bR)$ that
\beaa
0= \int_{\bR} (h*\tilde{K})(\lambda) e^{-i\eta\lambda}d\lambda = 
\left( \int_{\bR} h(\lambda) e^{-i\eta\lambda}d\lambda \right) \left( \int_{\bR} \tilde{K}(\lambda) e^{-i\eta\lambda}d\lambda \right) 
\eeaa
for a.e. $\eta\in\bR$.
Moreover, we have
\beaa
\int_{\bR} \tilde{K}(\lambda) e^{-i\eta\lambda}d\lambda =0
\eeaa
for a.e. $\eta\in\bR$, because we know that  
\beaa
\int_{\bR} h(\lambda) e^{-i\eta q} d\lambda 
= \int^{+\infty}_{0} \dfrac{s^{-1/2-i\eta} } {1+s^2}ds 
= \dfrac{\pi}{2\sin\left( \dfrac{\pi}{4} \left( 1-2i\eta\right)  \right)} \neq 0.
\eeaa
From the Parseval identity, 
we have $\|\tilde{K}\|_{L^2(\bR)}=0$ and thus 
$\hat{K}(s)= 0$ for a.e. $s\ge 0$ from \eqref{inte}.
Therefore, we conclude that $K = 0$.
\end{proof}

\medskip
%%%%%%%%
\subsection{Approximation of radial integral kernels}\label{subsec:ker}

From the preparations above, we give the proof of Theorem \ref{thm:ker}.
%%%%%%%%%%%
\begin{proof}[Proof of Theorem \ref{thm:ker}]
Let $m$ be a non-negative integer and $\{d_j\}_{j\in\bN}$ be a sequence of distinct positive constants that has a positive accumulation point.
As the orthonormal basis $\{\psi_j\}$ of $H^m_r(\bR^n)$ is complete from Lemmata \ref{lem:comp}, \ref{lem:ide}, and \ref{lem:zero}, 
for any $K\in H^m_r(\bR^n)$ and $\eps>0$, there exists $N_0\in\bN$ such that
\beaa
\left\|K - \dsum^{N_0}_{j=1} \theta_j \psi_j \right\|_{H^m(\bR^n)} < \eps,
\eeaa
where $\theta_j := \lng K, \psi_j\rng_{H^m_r(\bR^n)}$.
From \eqref{GS}, we find constants $\{\zeta_j\}_{1\le j \le N_0}$ satisfying
\beaa
\dsum^{N_0}_{j=1} \theta_j \psi_j(r) = \dsum^{N_0}_{j=1} \zeta_j \phi_j(r)
\eeaa
for all $r>0$.
Moreover, using \eqref{phi}, we construct $\{ \alpha_j \}_{1\le j \le J+N_0}$ satisfying
\beaa
\dsum^{N_0}_{j=1} \zeta_j \phi_j = \dsum^{J+N_0}_{j=1} \alpha_j k_j= K_{J+N_0}.
\eeaa
Thus, we obtain $K_{J+N_0}\in H^m_r(\bR^n)$ and
\beaa
\left\|K - K_{J+N_0} \right\|_{H^m(\bR^n)} < \eps.
\eeaa
Hence, the proof is complete.
\end{proof}

%%%%%%%%
\medskip
\subsection{Approximation of spatial convolutions}\label{subsec:con}

Finally, we give the proof of Corollary \ref{cor:con}.

%%%%%%%
\begin{proof}[Proof of Corollary \ref{cor:con}]
    Let $s\in [0,n/2)$ and $m\in\bN\cup\{0\}$ satisfy $m\ge s$.
    In addition, let $\{d_j\}_{j\in\bN}$ be a sequence of distinct positive constants that has a positive accumulation point.
    Fix $K\in H^m(\bR^n)$.
    Then, there is $C=C(s,n)>0$ such that
    \beaa
    \|K\|_{L^{2n/(n-2s)}(\bR^n)} \le C \|K\|_{H^{s}(\bR^n)} \le C \|K\|_{H^m(\bR^n)}
    \eeaa
    holds from Sobolev embedding theory.
    From Theorem \ref{thm:ker}, there exist $N\in\bN$ and constants $\{\alpha_j\}_{1\le j\le N}$ such that 
    \beaa
    K_N \in H^m(\bR^n)
    \eeaa
    and
    \beaa
    \left\| K - K_N \right\|_{H^m(\bR^n)} \le \dfrac{\eps}{C}
    \eeaa
    hold. 

    %%%
    We give $q,t\in[1,+\infty]$ with $1/q=1/t+1/2 +s/n$ and $f\in L^{q}(\bR^n)$, arbitrary.
    We then obtain
    \beaa
    \left\|K_{N}*f\right\|_{L^t(\bR^n)} \le
    \left\| K_N \right\|_{L^{2n/(n-2s)}(\bR^n)} \|f\|_{L^q(\bR^n)} < +\infty.
    \eeaa
    We thus have $K_N*f \in L^{t}(\bR^n)$.
    Similarly, we have
    \beaa
    \left\| K* f - K_N *f  \right\|_{L^t(\bR^n)} \le \left\| K - K_N  \right\|_{L^{2n/(n-2s)}} \|f\|_{L^q(\bR^n)}  \le \eps \|f\|_{L^{q}(\bR^n)}.
   \eeaa
   Hence, the proof is complete.
\end{proof}

\bigskip
%%%
\section{Numerical example of the approximation of an integral kernel}

In this section, we introduce a numerical example of the approximation.
The relationship between $\eps$ and $N$ is unclear, 
And therefore we present here how to find a linear sum of the Green functions that minimizes the error for a given radial integral kernel.

%%%
The discussion in the general case is essentially the same, and therefore the results in the case of $L^2(\bR^n)$ for $n=1,2,3$ are presented here.
Let $K\in L^2_r(\bR^n)$, $N\in\bN$ and $\{d_j\}_{j\in\bN}$ be distinct positive constants.
We define 
\beaa
E(\beta_1,\ldots,\beta_N):=\left\| K - \dsum^{N}_{j=1} \beta_j k_j \right\|^2_{L^2_r(\bR^n)}
\eeaa
for $(\beta_1,\ldots,\beta_N)\in\bR^N$.
As we find that
\beaa
\dfrac{\partial f}{\partial \beta_l} (\beta_1,\ldots,\beta_N) &=& -2 \left( \lng K, k_l \rng_{L^2_r(\bR^n)} - \dsum^{N}_{j=1} \beta_j \lng k_j,k_l \rng_{L^2_r(\bR^n)} \right) 
\eeaa
for $l=1,2,\ldots,N$, the critical point of $E$ is the solution of a simultaneous equation
\be\label{eq:critical}
\dsum^{N}_{j=1} \beta_j \lng k_j,k_l \rng_{L^2_r(\bR^n)} = \lng K, k_l \rng_{L^2_r(\bR^n)}  \quad (1\le l \le N).
\ee
To analyze the simultaneous equation, we defined the matrix $A=\{a_{j,l}\}_{1\le j,l\le N}$ as
\beaa
a_{j,l}:= \lng k_j , k_l \rng_{L^2_r(\bR^n)}
\eeaa
for $j,l\in\bN$.

%%%%%%
\begin{lemma}
    The symmetric matrix $A$ is positive-definite.
\end{lemma}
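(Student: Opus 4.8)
The plan is to show that $A$ is positive-definite by exhibiting it as a Gram matrix of linearly independent vectors. Indeed, for any $(\beta_1,\ldots,\beta_N)\in\bR^N$ we have
\beaa
\dsum^{N}_{j,l=1} \beta_j \beta_l\, a_{j,l} = \dsum^{N}_{j,l=1} \beta_j \beta_l \lng k_j, k_l\rng_{L^2_r(\bR^n)} = \left\| \dsum^{N}_{j=1} \beta_j k_j \right\|^2_{L^2_r(\bR^n)} \ge 0,
\eeaa
which already shows that $A$ is positive-semidefinite. It remains to rule out the degenerate case. Suppose the quadratic form vanishes for some $(\beta_1,\ldots,\beta_N)$; then $\sum_{j=1}^N \beta_j k_j = 0$ as an element of $L^2_r(\bR^n)$, hence (up to a null set, and then everywhere by continuity on $\bR^n\setminus\{0\}$ from Lemma \ref{lem:green}(i)) $\sum_{j=1}^N \beta_j k_j(r) = 0$ for all $r>0$. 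By Lemma \ref{lem:ind}, $\{k_j\}_{j\in\bN}$ is linearly independent, so $\beta_j = 0$ for all $j$. Therefore the quadratic form is strictly positive on $\bR^N\setminus\{0\}$, i.e. $A$ is positive-definite.

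The only point requiring a little care is that $a_{j,l}$ is well defined and that the interchange of the finite sum with the $L^2$ inner product is legitimate; this is immediate since $k_j\in L^2(\bR^n)$ for $n\le 3$ by Lemma \ref{lem:green}(iii) and the inner product is bilinear over finite sums. One also should note that the passage from "equal almost everywhere" to "equal pointwise for $r>0$" uses the continuity of $k_j$ on $\bR^n\setminus\{0\}$, so that Lemma \ref{lem:ind}, which is phrased for pointwise identities on $r>0$, applies. With these observations in place the argument is complete.

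I do not anticipate a genuine obstacle here: the statement is the standard fact that a Gram matrix is positive-definite exactly when the generating family is linearly independent, and all the needed inputs (membership in $L^2$, continuity away from the origin, linear independence) have already been established in Lemmata \ref{lem:green} and \ref{lem:ind}. The one slightly delicate step — bridging the $L^2$ null-set identity and the pointwise identity used in Lemma \ref{lem:ind} — is handled by the continuity of $k_j$ on $\bR^n\setminus\{0\}$, and I would state it explicitly to keep the argument airtight.
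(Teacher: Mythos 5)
Your proposal is correct and follows essentially the same route as the paper: identify $A$ as the Gram matrix of $\{k_j\}$, note that the quadratic form equals $\bigl\| \sum_j \beta_j k_j \bigr\|^2_{L^2_r(\bR^n)}$, and invoke Lemma \ref{lem:ind} to rule out the degenerate case. Your extra remark about bridging the $L^2$ null-set identity and the pointwise identity via continuity is a reasonable refinement, but the argument is the same.
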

%%%
\begin{proof}
    For $(c_1,\ldots,c_N)\in\bR^N$, we have
    \beaa
    \dsum^{N}_{j=1} \dsum^{N}_{l=1} c_j c_l a_{j,l} = \dsum^{N}_{j=1} \dsum^{N}_{l=1} \lng c_j k_j, c_l k_l \rng_{L^2_r(\bR^n)} = \left\| \dsum^{N}_{j=1}  c_j k_j  \right\|^{2}_{L^2_r(\bR^n)} \ge 0.
    \eeaa
    Moreover, it equals $0$ if and only if $c_j=0$ for all $j=1,\ldots,N$ from Lemma \ref{lem:ind}.
    Thus, $A$ is positive-definite.
\end{proof}

\medskip
%%%%%
Therefore, $E$ has a unique critical point $\{\alpha_{j} \}_{j=1,\ldots,N}$.
Moreover, it satisfies 
\beaa
E(\alpha_{1},\ldots,\alpha_{j}) = \min_{(\beta_1,\ldots,\beta_N)\in\bR^N} E(\beta_1,\ldots,\beta_N),
\eeaa
because the Hessian matrix of $E$ is equal to $2A$ for any $(\beta_1,\ldots,\beta_N)\in\bR^n$.

%%%%%
To solve \eqref{eq:critical}, we specifically compute $A$.
\begin{lemma}
    For $j,l\in\bN$, the holds that:
    \begin{itemize}
        \item[(i)] when $n=1$, we obtain
        \beaa
        a_{j,l} = \dfrac{\pi}{2(\sqrt{d_j}+\sqrt{d_l})};
        \eeaa

        %%%
        \item[(ii)] when $n=2$, we obtain
        \beaa
        a_{j,l} = 
        \begin{cases}
        \dfrac{1}{2d_j} & j=l, \\
        \dfrac{1}{2(d_j - d_l)} \log \dfrac{d_j}{d_l} & j\neq l;
        \end{cases}
        \eeaa

        %%%
        \item[(iii)] when $n=3$, we obtain
        \beaa
        a_{j,l} = \dfrac{\pi}{2\sqrt{d_j d_l}(\sqrt{d_j}+\sqrt{d_l})}.
        \eeaa
    \end{itemize}
\end{lemma}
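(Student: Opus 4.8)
The plan is to pass to the Fourier side, where every entry becomes an elementary rational integral. By the definition of the inner product on $L^2_r(\bR^n)=H^0_r(\bR^n)$ together with Lemma \ref{lem:green} (i), which gives $\hat{k_j}(s)=1/(1+d_j s^2)$, one has
\[
a_{j,l} = \langle k_j, k_l \rangle_{L^2_r(\bR^n)} = \int^{+\infty}_{0} \dfrac{s^{n-1}}{(1+d_j s^2)(1+d_l s^2)}\,ds .
\]
By Lemma \ref{lem:green} (iii) the functions $k_j$ lie in $L^2(\bR^n)$ for $n\le 3$, so $a_{j,l}$ is well defined; equivalently, the integrand behaves like $s^{n-1}$ near $0$ and like $s^{n-5}$ at infinity, and $n\le 3<4$ guarantees convergence. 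It then remains only to evaluate this integral in the three cases $n=1,2,3$.

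For $n=1$ the integrand is exactly the one already computed in the proof of Lemma \ref{lem:ind}, so $a_{j,l}=\pi/\big(2(\sqrt{d_j}+\sqrt{d_l})\big)$ with no further work. For $n=3$ I would use the partial fraction identity $s^2/\big((1+d_js^2)(1+d_ls^2)\big)=(d_l-d_j)^{-1}\big((1+d_js^2)^{-1}-(1+d_ls^2)^{-1}\big)$ valid for $j\neq l$, reduce to $\int^{+\infty}_{0}(1+d s^2)^{-1}ds=\pi/(2\sqrt{d})$, and simplify via $d_l-d_j=(\sqrt{d_l}-\sqrt{d_j})(\sqrt{d_l}+\sqrt{d_j})$ to obtain $\pi/\big(2\sqrt{d_jd_l}(\sqrt{d_j}+\sqrt{d_l})\big)$ (which, being symmetric and nonsingular, in fact holds for $j=l$ as well, as one also checks directly from $\int^{+\infty}_{0}s^2(1+d_js^2)^{-2}ds$).

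For $n=2$ the substitution $u=s^2$ gives $a_{j,l}=\tfrac12\int^{+\infty}_{0}\big((1+d_ju)(1+d_lu)\big)^{-1}du$. When $j=l$ this is $\tfrac12\int^{+\infty}_{0}(1+d_ju)^{-2}du=1/(2d_j)$. When $j\neq l$ I would use partial fractions, and this is the one place that needs care: the two logarithmic antiderivatives each diverge as $u\to+\infty$, so one must combine them before taking the limit, writing the antiderivative as $(d_l-d_j)^{-1}\log\big((1+d_lu)/(1+d_ju)\big)$, whose divergent parts cancel and whose limit at infinity is $(d_l-d_j)^{-1}\log(d_l/d_j)=(d_j-d_l)^{-1}\log(d_j/d_l)$, the claimed value. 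Apart from this cancellation the computation is pure bookkeeping; a convenient consistency check is that letting $d_l\to d_j$ in the $j\neq l$ formula recovers the diagonal value $1/(2d_j)$.
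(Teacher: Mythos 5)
Your proposal is correct, and since the paper states this lemma without proof, your computation is exactly the evident intended one: the inner product on $H^0_r(\bR^n)$ is defined directly on the Fourier side, so $a_{j,l}=\int_0^{+\infty}s^{n-1}\bigl((1+d_js^2)(1+d_ls^2)\bigr)^{-1}ds$ by Lemma \ref{lem:green} (i), and your partial-fraction evaluations for $n=1,3$, the substitution $u=s^2$ for $n=2$, and the careful cancellation of the divergent logarithms all check out (as do your limiting consistency checks). Nothing is missing.
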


\medskip
%%%%
To present numerical examples, we use 
\beaa
K(r)= e^{-r^2/4},\quad \hat{K}(s) = (\sqrt{4\pi})^n e^{-s^2}
\eeaa
and 
\beaa
d_j = 1.0 + \sin(j-1)\quad (j=1,2,\ldots,10).
\eeaa
The graphs of $K(x)$ and $K_N(x)$ are shown in Fig. \ref{fig:app}.
The graphs are largely similar in all cases, except for the neighborhood of $x=0$ in the case $n=3$.
This is because the Green function has a singularity at its origin.
As for the coefficients, $\max_{1\le j\le N}|\alpha_j|$ is large in all cases, on the order of $10^{5}$ or more.
Even in the simple example, the coefficients can be large.
Moreover, note that the coefficients are not necessarily positive in this case,
although we used a Gaussian that is a positive-valued and positive definite function.
The class of $K(x)$ for which the coefficients $\{\alpha_j\}_{1\le j\le N}$ have a constant sign is unclear 
and is an open problem.

%%%%%
\begin{figure}[bt] 
\begin{center}
	\includegraphics[width=15cm]{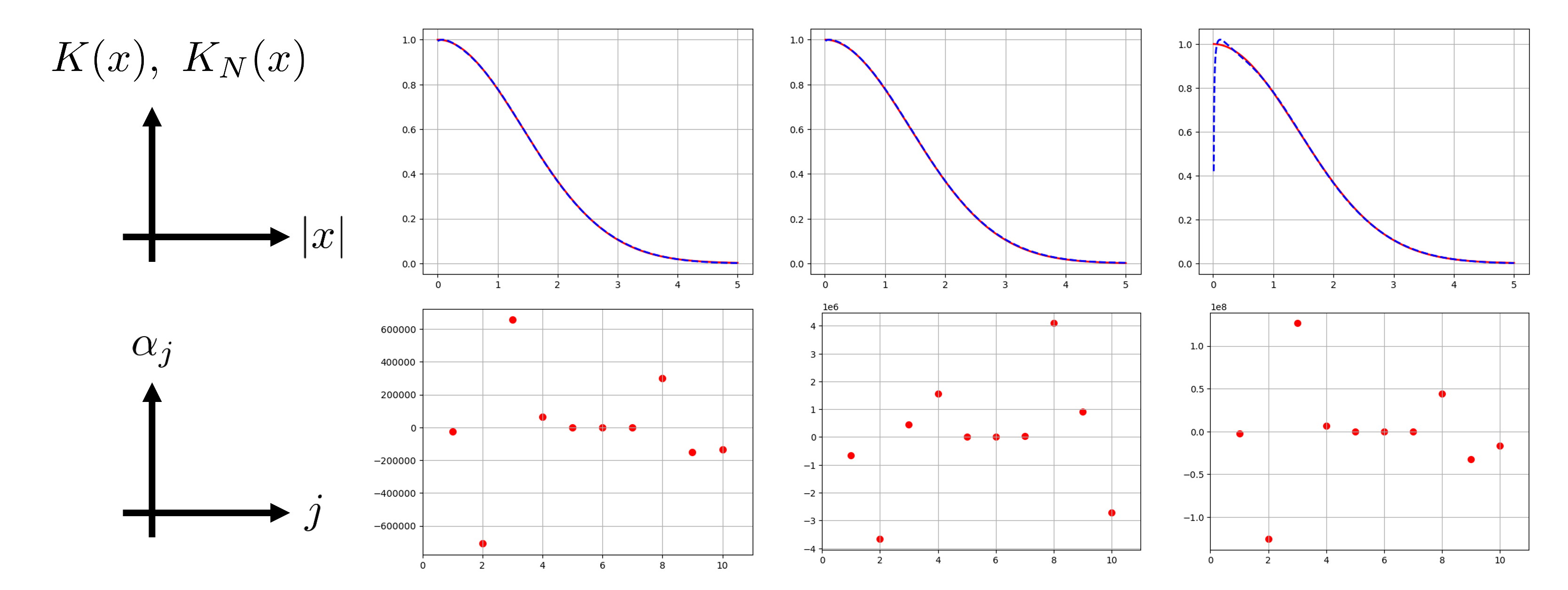}
\end{center}
\caption{\small{
Graphs of $K(x)$ and $K_N(x)$ with $N=10$ (top panels) and the distributions of $\{\alpha_j\}_{1\le j\le N+1}$ (bottom panels).
The plots of $K(x)$ and $K_N(x)$ are presented by the red curve and blue-dashed curve, respectively.
The numerical examples are presented for $n=1$ (left), $n=2$ (middle), and $n=3$ (right).
}}
\label{fig:app}
\end{figure}
%%%%%

%%%%%%%%%%%%%%%%
\begin{remark}
The determination of $\{\alpha_j\}_{1\le j\le N}$ in other cases proceeds in a similar manner.
It is remarked that the Green function belongs neither to $L^2(\bR^n)$ in the case of $n\ge 4$ nor to $H^m(\bR^n)$ when $m\ge 2$ and $n\ge 1$.
Thus, by replacing $\{k_j\}_{j\in\bN}$ with the linear sum $\{\phi_j\}_{j\in\bN}$, the coefficients can be determined through relation \eqref{phi}.
\end{remark}

\bigskip
%%%
\section{Concluding remarks}

This paper developed an approximation method for spatial convolutions using a PDE system, transforming it into a form applicable to high-dimensional spaces. 
An application of our results is the approximation of solutions to evolution equations with spatial convolutions by time-evolution PDE systems, as proposed in \cite{MT, NTY1, NTY2}. 
The approximation method for an advection--diffusion equation with spatial convolution in the one-dimensional case was mainly established in \cite{MT} through the approximation of the derivative of the integral kernel. 
Our results are expected to extend the previous one-dimensional findings to higher-dimensional spaces.

%%%%
Recently, a constructive approximation for radial integral kernels in $L^{1}(\bR^n)$ for $n=1,2,3$ was obtained \cite{ITpre}.
This result provides specific coefficients and approximation accuracy using the properties of the Green function, depending on the dimension $n$.
In the general case, it is still unknown whether such an approximation in $L^{1}(\bR^n)$ is possible.
If we follow the approach outlined in this paper, 
one potential direction is to examine the completeness of the Green functions in a weighted Hilbert space that can be embedded in $L^1(\bR^n)$. 
However, it remains an open question whether this approach will be successful.

%%%%%%%%%%
\section*{Declaration of generative AI and AI-assisted technologies in the writing process}
During the preparation of this work the authors used DeepL and Grammerly in order to improve their English writing. 
After using this tool/service, the authors reviewed and edited the content as needed and take full responsibility for the content of the publication.

\bigskip
%%%
\section*{Acknowledgments}
The authors express their sincere gratitude to Daisuke Kawagoe (Kyoto University) for fruitful discussions and Glenn Pennycook, MSc, from Edanz (https://jp.edanz.com/ac) for editing a draft of this manuscript. 
The authors were partially supported by JSPS KAKENHI Grant Number 24H00188. HI was partially supported by JSPS KAKENHI Grant Numbers 23K13013. YT was partially supported by JSPS KAKENHI Grant Number 22K03444, 24K06848.

%%%%%%%%%%%%%%%%%%%%%%%%%%%%%%%%%%%%%%%%%%%%%%%%%

\end{document}